\newtheorem{theorem}{Theorem}[section]
\newtheorem{lemma}[theorem]{Lemma}
\newtheorem{proposition}[theorem]{Proposition}
\newtheorem{corollary}[theorem]{Corollary}
\theoremstyle{definition}
\newtheorem{definition}[theorem]{Definition}
\newtheorem{example}[theorem]{Example}
\theoremstyle{remark}
\newtheorem{remark}[theorem]{Remark}
\journal{Discrete Mathematics}
\def\ps@pprintTitle{%
 \let\@oddhead\@empty
 \let\@evenhead\@empty
 \def\@oddfoot{}%
 \let\@evenfoot\@oddfoot}
\begin{document}

\begin{frontmatter}

\title{Positive definite functions on semilattices}

\author[unsw]{V. Kaarnioja\corref{cor1}}
\ead{vesa.kaarnioja@iki.fi}
\cortext[cor1]{Corresponding author}
\author[tuni]{P. Haukkanen}
\ead{pentti.haukkanen@tuni.fi}
\author[aalto]{P. Ilmonen}
\ead{pauliina.ilmonen@aalto.fi}
\author[tuni]{M. Mattila}
\ead{mika.mattila@tuni.fi}
\address[unsw]{School of Mathematics and Statistics, University of New South Wales, Sydney NSW 2052, Australia}
\address[tuni]{Faculty of Information Technology and Communication Sciences, Tampere University, FI-33014, Finland}
\address[aalto]{Aalto University School of Science, Department of Mathematics and Systems Analysis, P.O. Box 11100, FI-00076, Finland}

\begin{abstract}
We introduce a notion of positive definiteness for functions $f\!:P\to\mathbb{R}$ defined on meet semilattices $(P,\preceq,\wedge)$ and prove several properties for these functions. In addition, we utilize the $LDL^{\rm T}$ decomposition of meet matrices in order to explore the properties of multivariate positive definite arithmetic functions $f\!:\mathbb{Z}_+^d\to\mathbb{R}$. Finally, we give a series of examples and counterexamples of positive definite functions.
\end{abstract}

\begin{keyword}
arithmetic function\sep positive definite function\sep meet matrix\sep GCD matrix\sep semilattice
\MSC[2010] 06A12\sep 11A25\sep 11C20\sep 15A69\sep 15B36
\end{keyword}

\end{frontmatter}

\section{Introduction}

Recently, a notion of positive definite arithmetic functions $f\!:\mathbb{Z}_+\to\mathbb{R}$ was introduced in~\cite{HaukkanenMattila18}. The definition given in~\cite{HaukkanenMattila18} is closely connected to the structure theory~\cite{Ovall} and positive definiteness~\cite{BeLi} of GCD matrices defined on the divisor lattice $(\mathbb{Z}_+,|)$.

In this paper, we explore an extension of the aforementioned notion of positive definiteness for a more general class of functions $f\!:P\to\mathbb{R}$ defined on meet semilattices $(P,\preceq,\wedge)$. We discuss several properties that positive definiteness imposes on three classes of functions:
\begin{itemize}
\item[(A)] functions $f\!:P\to\mathbb{R}$ defined on meet semilattices $(P,\preceq,\wedge)$,
\item[(B)] multivariate functions $f\!:P_1\times\cdots\times P_d\to\mathbb{R}$ defined on the product of meet semilattices $(P_i,\preceq_{P_i},\wedge_{P_i})$, $i\in\{1,\ldots,d\}$,
\item[(C)] multivariate arithmetic functions $f\!:\mathbb{Z}_+^d\to\mathbb{R}$.
\end{itemize}
We remark that class (C) is a special case of class (B), whereas class (B) is a special case of the most general case (A). On the other hand, the results of~\cite{HaukkanenMattila18} can thus be seen as a special case of all of the above classes.

For recent theoretical work on multivariate arithmetic functions, we refer to the excellent treatise by T\'{o}th~\cite{Toth}.

We give the basic notations and definitions related to meet matrices in Subsections~\ref{sec:meetsemilattice} and~\ref{sec:notations}. In Section~\ref{sec:poset} we give the general definition of positive definiteness for functions belonging to class (A) and establish the basic properties induced by positive definiteness on these functions. In Section~\ref{sec:multivariate}, we first prove a series of results relating to the structure of meet matrices with Cartesian product form to support our analysis. In Section~\ref{sec:multivariatearithmetic}, we discuss how the results of the previous sections can be specialized to class (C) of multivariate arithmetic functions. Finally, we present a series of examples of positive definite functions in several variables as well as accompanying counterexamples in Section~\ref{sec:examples}. We end this paper with conclusions on the results.

\subsection{Meet semilattices}\label{sec:meetsemilattice}
Let $(P,\preceq)$~be a nonempty poset equipped with a partial order relation $\preceq$. We denote the greatest lower bound of $x,y\in P$~by
\[
x\wedge y=\sup\{z\in P\mid z\preceq x\text{ and }z\preceq y\},
\]
which is called the {\it meet} of $x$ and $y$ provided that it exists. The triplet $(P,\preceq,\wedge)$~is called a {\it meet semilattice} if $x\wedge y$ exists for all $x,y\in P$. The poset $(P,\preceq)$ is called {\it locally finite}  if the interval
\[
[x,y]_P=\{z\in P\mid x\preceq z\preceq y\}
\]
is finite for all $x,y\in P,$ i.e., any two elements $x,y\in P$~are separated by at most a finite number of elements subject to the partial ordering $\preceq$.

A finite nonempty set $S\subset P$ is called {\it meet closed} if $x\wedge y\in S$ for all $x,y\in S$. On the other hand, the set $S$ is called {\it lower closed} if having any $x\in P$ with $x\preceq y$ for some $y\in S$ implies that $x\in S$. While a lower closed set is naturally meet closed, the converse is generally not true.

\subsection{Table of notations}\label{sec:notations}
The special notations used throughout this paper are listed in the following table.
\begin{table}[!h]
\begin{center}
\begin{tabular}{ll}
$\mathbf{\hat 0}_P$&The least element $\mathbf{\hat 0}_P$ of the poset $(P,\preceq_P)$~such that $\mathbf{\hat 0}_P\preceq_P x$ for all $x\in P$.\\
$*_P$&The $P$-convolution of functions $f,g\!:P\times P\to\mathbb{R}$ defined by setting\\
&\hfil$(f*_Pg)(x,y)=\displaystyle\sum_{x\preceq_P z\preceq_P y}f(x,z)g(z,y),\quad x,y\in P.$\hfil\\
$\zeta_P$&The incidence function $\zeta_P(x,y)=1$ if $x\preceq_P y$, $x,y\in P$, and $0$~otherwise.\\
$\delta_P$&The incidence function $\delta_P(x,y)=1$~if $x=y$, $x,y\in P$, and $0$~otherwise.\\
$\mu_P$&The M\"{o}bius function of the poset $(P,\preceq_P)$ is the inverse of $\zeta_P$~under $*_P$.\\
$|$&The divisibility relation of positive integers: $x|y$~$\Leftrightarrow$~$(y/x)\in\mathbb{Z}_+$,\ $x,y\in\mathbb{Z}_+$.
\end{tabular}
\end{center}
\end{table}

For an introduction to lattices and incidence functions, see for example~\cite{Mc,Stanley}.

\begin{remark}
The convolution of incidence functions as well as the Dirichlet convolution of one and several variables are all usually denoted as $f*g$ in the literature. In this paper, we adopt a different convention in order to distinguish these binary operations.
\end{remark}

\section{Positive definite functions defined on semilattices}\label{sec:poset}

The properties of functions $f\!:P\to\mathbb{R}$~with poset domains can be neatly characterized by introducing the notion of meet matrices.

\begin{definition}
Let $(P,\preceq,\wedge)$~be a meet semilattice, $S=\{x_1,\ldots,x_n\}$ a finite nonempty subset of $P$, and $f\!:P\to\mathbb{R}$ a function. The matrix $A=(S)_f$ defined by setting
\[
A_{i,j}=f(x_i\wedge x_j),\quad i,j\in\{1,\ldots,n\},
\]
is called the {\it meet matrix} of $S$ with respect to $f$.
\end{definition}

We can now define a general notion of positive definiteness for functions defined on meet semilattices.

\begin{definition}\label{def:meetpd}
Let $(P,\preceq,\wedge)$~be a meet semilattice. A function $f\!:P\to\mathbb{R}$ is called {\it positive definite} if the meet matrix $(S)_f$ is positive semidefinite for all finite sets $S\subset P$, $S\neq\varnothing$.\end{definition}

We can employ the properties of meet matrices to obtain a characterization for Definition~\ref{def:meetpd}.

\begin{theorem}\label{thm:meetchara}
Let $(P,\preceq,\wedge)$ be a locally finite meet semilattice. Let the finite nonempty sets $S_i\subset P$, $i\in \mathbb{Z}_+$, be a covering of $P$ such that
\[
P=\bigcup_{i=1}^\infty S_i.
\]
Then $f\!:P\to\mathbb{R}$~is positive definite if and only if $(S_m)_f$ is positive semidefinite for all $m\in\mathbb{Z}_+$.
\end{theorem}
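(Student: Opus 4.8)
The plan is to prove the two implications separately, with the forward direction immediate and the converse carrying all the content. For the ``only if'' direction, if $f$ is positive definite then by Definition~\ref{def:meetpd} the meet matrix $(S)_f$ is positive semidefinite for every finite nonempty $S\subset P$; since each covering set $S_m$ is itself such a set, $(S_m)_f$ is positive semidefinite for all $m\in\mathbb{Z}_+$. The substance lies in the converse: assuming that $(S_m)_f$ is positive semidefinite for every $m$, I must show that $(S)_f$ is positive semidefinite for an \emph{arbitrary} finite nonempty $S\subset P$, not merely for the members of the given covering.

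First I would record the hereditary behaviour of meet matrices: whenever $S\subseteq T$ are finite subsets of $P$, the matrix $(S)_f$ is precisely the principal submatrix of $(T)_f$ retaining the rows and columns indexed by $S$, and every principal submatrix of a positive semidefinite matrix is again positive semidefinite. Hence it suffices to embed the given $S$ into some finite set whose meet matrix is already known to be positive semidefinite. To manufacture such a set from the covering, I would pass to the exhaustion $T_m:=\bigcup_{i=1}^m S_i$; these sets are finite, increasing, and satisfy $\bigcup_m T_m=P$. Since $S$ is finite, each of its elements lies in some $S_i$, so taking $m$ to be the largest index occurring among them yields $S\subseteq T_m$. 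The claim is thereby reduced to showing that each amalgamated matrix $(T_m)_f$ is positive semidefinite.

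The remaining and decisive step is to upgrade the positive semidefiniteness of the individual blocks $(S_i)_f$ to that of $(T_m)_f$. Here I would replace $T_m$ by its (still finite) meet closure $\widehat{T_m}$, which preserves $S$ as a principal submatrix and so reduces the goal to proving that $(\widehat{T_m})_f$ is positive semidefinite, while also granting access to the $LDL^{\rm T}$/M\"{o}bius structure available for meet-closed sets: one has $(\widehat{T_m})_f=U\,\mathrm{diag}(g(x))\,U^{\rm T}$ with $U$ lower unitriangular and $g(x)=\sum_{y\preceq x,\,y\in\widehat{T_m}}\mu_{\widehat{T_m}}(y,x)f(y)$ the increments of $f$ computed inside $\widehat{T_m}$, so that $(\widehat{T_m})_f\succeq 0$ exactly when every $g(x)\ge 0$. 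The plan is then to certify each increment $g(x)\ge 0$ by exhibiting, from the covering, a set $S_i$ that simultaneously contains $x$ and the elements of $\widehat{T_m}$ governing its increment, and reading off the required sign from the positive semidefiniteness of $(S_i)_f$. I expect this certification --- guaranteeing that a single covering set captures $x$ together with the relevant meets below it --- to be the main obstacle, with the local finiteness of $(P,\preceq,\wedge)$ and the meet-closedness of $\widehat{T_m}$ as the structural inputs needed to make the argument go through.
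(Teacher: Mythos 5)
Your first two paragraphs track the paper's own argument exactly: the ``only if'' direction is immediate, the exhaustion $T_m=\bigcup_{i=1}^m S_i$ is finite and increasing with union $P$, every finite $S$ sits inside some $T_m$, and principal submatrices of positive semidefinite matrices are positive semidefinite. The genuine gap is precisely the step you flag as the ``main obstacle'': upgrading positive semidefiniteness of the individual blocks $(S_i)_f$ to positive semidefiniteness of $(T_m)_f$ (equivalently, certifying each M\"{o}bius increment $g(x)\geq 0$ from a single covering set). This step is not merely left unproven in your proposal --- it is false for arbitrary coverings, so no certification argument can close it. Take $P=\{\hat{0},a,b,c\}$ with $a,b,c$ pairwise incomparable, $\hat{0}$ the least element (so all pairwise meets of $a,b,c$ equal $\hat{0}$), the covering $S_1=\{a,b\}$, $S_2=\{b,c\}$, $S_3=\{a,c\}$, $S_i=\{\hat{0}\}$ for $i\geq 4$, and $f(\hat{0})=1$, $f(a)=\tfrac12$, $f(b)=f(c)=2$. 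Every $(S_i)_f$ is positive semidefinite: the matrices that occur are $\bigl(\begin{smallmatrix}1/2&1\\ 1&2\end{smallmatrix}\bigr)$ (determinant $0$), $\bigl(\begin{smallmatrix}2&1\\ 1&2\end{smallmatrix}\bigr)$, and $(1)$. Yet
\[
(\{a,b,c\})_f=\begin{pmatrix}1/2&1&1\\ 1&2&1\\ 1&1&2\end{pmatrix}
\]
has determinant $-\tfrac12<0$, so $f$ is not positive definite. In your language: the meet closure of $T_3$ is $\{\hat{0},a,b,c\}$, the increment of $a$ is $f(a)-f(\hat{0})=-\tfrac12<0$, and no covering set can certify otherwise because no $S_i$ contains the comparable pair $\{\hat{0},a\}$ that forces the monotonicity $f(\hat{0})\leq f(a)$ of Theorem~\ref{thm:monotonicity}(ii); incomparable pairs only enforce $f(x)f(y)\geq f(x\wedge y)^2$.

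What this reveals is that the converse implication requires the covering sets themselves to swallow arbitrary finite subsets, i.e., the covering must be nested (or at least directed). The paper's proof secures this by declaring ``without loss of generality'' that the covering is nested, replacing $S_i$ by $S_i'=S_1\cup\cdots\cup S_i$; note, however, that positive semidefiniteness of the $(S_i)_f$ does not transfer to $(S_i')_f$ --- that is exactly the counterexample above --- so this reduction amounts to reading the hypothesis as applying to a nested covering from the outset, which is how the theorem is actually invoked later (e.g., $S_m=\{1,\ldots,m\}^d$ in Theorem~\ref{thm:chara}). Under that reading, your own first two paragraphs already constitute the entire proof: $T_m=S_m$, the hypothesis applies to it directly, and the principal-submatrix argument finishes the converse. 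So the correct repair is not to pursue the M\"{o}bius-increment certification (a dead end), but to strengthen or reinterpret the hypothesis to nested coverings, after which your proposal collapses to the paper's proof.
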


\proof Without loss of generality, the covering $(S_i)_{i=1}^\infty$ can be assumed to be nested in the sense that $S_1\subset S_2\subset\cdots\subset P$ since it is always possible to construct a nested covering $(S_i')_{i=1}^\infty$ of $P$ by setting $S_1'=S_1$ and $S_i'=S_{i-1}'\cup S_i$, $i\geq 2$.

The ``only if'' direction follows immediately from the definition. To show the converse, let us assume that the matrix $(S_m)_f$ is positive semidefinite for all $m\in\mathbb{Z}_+$. Let $S$ be an arbitrary finite and nonempty subset of $P$. Then there is a positive integer $m$ such that $S\subset S_m$. The claim that the matrix $(S)_f$~is positive semidefinite now follows from the fact that it is a principal submatrix of the positive semidefinite matrix $(S_m)_f$, and every principal submatrix of a positive semidefinite matrix is always positive semidefinite (see, e.g., \cite[Observation~7.1.2]{HornJohnson}).\endproof

\begin{example}
If the locally finite meet semilattice $(P,\preceq,\wedge)$ consists of elements satisfying $x_1\preceq x_2\preceq\cdots\preceq x_n\preceq\cdots$, then it is called a {\it chain}. In this case, a covering for $P$~is given by the sets $S_m=\{x_1,\ldots,x_m\}$, and the positive definiteness of a function $f\!:P\to\mathbb{R}$ can be determined by proving the positive definiteness of the meet matrices $(\{x_1,\ldots,x_m\})_f$ for all $m\in\mathbb{Z}_+$.
\end{example}

The $LDL^{\rm T}$ decompositions of meet matrices provide an excellent way to characterize positive definite functions $f\!:P\to\mathbb{R}$. Since the $LDL^{\rm T}$ decomposition may be interpreted as an inertia preserving transformation of a matrix, we can deduce a criterion for the positive definiteness of functions $f\!:P\to\mathbb{R}$ using the decomposition theory of meet matrices.
\begin{theorem}\label{thm:meetcharacoro}
Let $(P,\preceq,\wedge,\mathbf{\hat{0}}_P)$ be a locally finite meet semilattice with the least element $\mathbf{\hat{0}}_P$. Let us assume that there exists a sequence of finite sets $S_i\subset P$, $i\in\mathbb{Z}_+$,~that cover $P$ such that
\[
P=\bigcup_{i=1}^\infty S_i.
\]
Then $f\!:P\to\mathbb{R}$~is positive definite if and only if
\[
(f_r*_P\mu_P)(\mathbf{\hat 0}_P,x)\geq 0\quad\text{for all }x\in P,
\]
where the restricted incidence function $f_r(\mathbf{\hat{0}}_P,x)=f(x)$, $x\in P$.
\end{theorem}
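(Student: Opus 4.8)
The plan is to reduce the positive definiteness of $f$ to a nonnegativity condition on the diagonal factor of an $LDL^{\mathrm T}$-type congruence of the meet matrix, taken over finite lower closed sets. First I would record the Möbius-inversion identity underlying the statement. Writing $g(x):=(f_r *_P \mu_P)(\mathbf{\hat 0}_P,x)=\sum_{z\preceq x}f(z)\,\mu_P(z,x)$ and using that $\mu_P$ is the $*_P$-inverse of $\zeta_P$, a direct computation with the defining relation $\mu_P *_P \zeta_P = \delta_P$ gives $\sum_{z\preceq x}g(z)=f(x)$ for every $x\in P$. Note that local finiteness together with the least element $\mathbf{\hat 0}_P$ makes each principal down-set $\{z\in P:z\preceq x\}=[\mathbf{\hat 0}_P,x]_P$ finite, so all of these sums are finite and $g$ is well defined.

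The key step is a factorization over a finite lower closed set $S=\{x_1,\ldots,x_n\}$. I would substitute $f(x_i\wedge x_j)=\sum_{z\preceq x_i\wedge x_j}g(z)$ into the definition of the meet matrix $A=(S)_f$. Because $S$ is lower closed — hence meet closed, so $x_i\wedge x_j\in S$ — the summation index $z$ ranges exactly over those $x_k\in S$ satisfying $x_k\preceq x_i$ and $x_k\preceq x_j$, which gives $A_{i,j}=\sum_{k}\zeta_P(x_k,x_i)\,g(x_k)\,\zeta_P(x_k,x_j)$. Setting $E_{i,k}=\zeta_P(x_k,x_i)$ and $\Lambda=\mathrm{diag}(g(x_1),\ldots,g(x_n))$, this reads $A=E\Lambda E^{\mathrm T}$; ordering $S$ by a linear extension of $\preceq$ turns $E$ into a unit lower triangular (hence invertible) matrix, so $A=E\Lambda E^{\mathrm T}$ is an $LDL^{\mathrm T}$ decomposition. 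Since $E$ is invertible, this congruence preserves inertia, and $A$ is positive semidefinite if and only if $\Lambda$ is, i.e., if and only if $g(x_k)\ge 0$ for all $k$. I expect this factorization — in particular the observation that lower closedness is exactly what collapses the down-set sum into a clean congruence — to be the main technical point, and it is precisely the $LDL^{\mathrm T}$ decomposition theory for meet matrices alluded to above.

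With the factorization established, both implications are short. For the ``only if'' direction, fix $x\in P$ and apply the factorization to the finite lower closed set $[\mathbf{\hat 0}_P,x]_P$: positive definiteness forces this meet matrix to be positive semidefinite, so by inertia preservation $\Lambda\ge 0$, and in particular the diagonal entry $g(x)\ge 0$; as $x$ is arbitrary, $g\ge 0$ on $P$. For the ``if'' direction, assume $g\ge 0$ everywhere and replace the given covering $(S_i)_{i=1}^\infty$ by the lower closures $\overline{S_i}=\bigcup_{y\in S_i}[\mathbf{\hat 0}_P,y]_P$. Each $\overline{S_i}$ is a finite union of finite down-sets, hence finite and lower closed, and contains $S_i$, so $(\overline{S_i})_{i=1}^\infty$ is again a covering of $P$. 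The factorization yields $(\overline{S_i})_f=E\Lambda E^{\mathrm T}$ with $\Lambda\ge 0$, so each $(\overline{S_i})_f$ is positive semidefinite, and Theorem~\ref{thm:meetchara} applied to this covering then gives that $f$ is positive definite. (One could equally bypass the covering: any finite $S$ lies in the finite lower closed set $\overline S$, whose meet matrix is positive semidefinite by the factorization, and $(S)_f$ is a principal submatrix of it.)
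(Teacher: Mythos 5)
Your proposal is correct and follows essentially the same route as the paper: pass to the lower closures of the covering sets, factor the meet matrix over a finite lower closed set as $E\Lambda E^{\mathrm T}$ with diagonal entries $(f_r*_P\mu_P)(\mathbf{\hat 0}_P,x_k)$, and conclude via Sylvester's law of inertia together with Theorem~\ref{thm:meetchara}. The only difference is cosmetic --- you derive the $LDL^{\mathrm T}$ factorization directly from M\"obius inversion (and explicitly note that $E$ is unit triangular, hence invertible), whereas the paper cites the existing decomposition theory of meet matrices for this step.
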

\proof The covering $(S_i)_{i=1}^\infty$~can be assumed to be nested in the sense that $S_1\subset S_2\subset\cdots\subset P$ (see the remark at the beginning of the proof of Theorem~\ref{thm:meetchara}).

We first remark that under the assumptions of this theorem, it is possible to construct another covering for $P$~consisting of only lower closed sets. We proceed by describing this construction.

Let us define the sets
\[
T_i=\{y\in P\mid y\preceq x,\ x\in S_i\},\quad i\in\mathbb{Z}_+.
\]
Let $i\in\mathbb{Z}_+$~be arbitrary. By the reflexivity of the partial order relation, it holds that $x\preceq x$ for all $x\in P$. Hence $S_i\subset T_i$. In consequence, the sets $T_i$, $i\in\mathbb{Z}_+$, form a covering for $P$. Moreover, the set $T_i$ must be finite since -- due to the assumption that the ambient meet semilattice is locally finite -- each $y\in T_i$ lies in the finite interval $y\in [\mathbf{\hat 0}_P,x]_P$ for some $x\in S_i$.

Next, let us show that the sets $T_i$ are lower closed. To this end, let $y\in T_i$ and $z\in P$~be such that $z\preceq y$. By construction, $y\preceq x$ for some $x\in S_i$. The transitive property of the partial order relation means that $z\preceq y\preceq x$ $\Rightarrow$ $z\preceq x$ $\Rightarrow$ $z\in T_i$. Hence $T_i$ is lower closed.

Due to the previous discussion, we may assume that the covering $(T_i)_{i=1}^\infty$ of $P$ consists of finite and nonempty lower closed sets $T_i$, $i\in\mathbb{Z}_+$. By Theorem~\ref{thm:meetchara}, it suffices to show that $(T_i)_f$ is positive semidefinite for all $i\in\mathbb{Z}_+$. Without loss of generality, we may assume that the elements of $T_i=\{x_1,\ldots,x_n\}$ are ordered $x_i\preceq x_j$ $\Rightarrow$~$i\leq j$. It follows from the decomposition theory of meet matrices~\cite[Theorem~12]{Bhat91} and from the formula of the M\"{o}bius function of a lower closed set~\cite[Example~1]{Hau96} that $(T_i)_f=EDE^{\rm T}$, where $E$ is an $n\times n$ matrix defined as $E_{i,j}=1$~if $x_j\preceq x_i$, $E_{i,j}=0$~otherwise, and $D={\rm diag}(d_1,\ldots,d_n)$, where
\[
d_i=(f_r*_P\mu_P)(\mathbf{\hat{0}}_P,x_i),\quad i\in\{1,\ldots,n\},
\]
where $f_r(\mathbf{\hat 0}_P,x)=f(x)$~for all $x\in P$.

The diagonal matrix $D$ is clearly positive semidefinite precisely when $d_i\geq 0$ for all $i\in\{1,\ldots,n\}$. Since $(T_i)_f$ is a congruence transformation of $D$, Sylvester's law of inertia implies that $(T_i)_f$ is positive semidefinite if and only if $d_i\geq 0$ for all $i\in\{1,\ldots,n\}$.\endproof

\begin{corollary}\label{cor:pcor}
Let $(P,\preceq,\wedge,\mathbf{\hat 0}_P)$ be a locally finite semilattice and let $S_1\subset S_2\subset\cdots\subset P$ be covering for $P$, where each $S_i$ is lower closed. Let $f\!:P\to\mathbb{R}$ be of the form $$f(x)=(g_r*_P\zeta_P)(\mathbf{\hat 0}_P,x),\quad  x\in P,$$ where $g_r(\mathbf{\hat 0}_P,x)=g(x)\geq 0$ for all $x\in P$. Then $f$ is positive definite.
\end{corollary}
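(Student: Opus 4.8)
The plan is to reduce the claim directly to the characterization of positive definiteness furnished by Theorem~\ref{thm:meetcharacoro}. All of its hypotheses are already in force: the semilattice $(P,\preceq,\wedge,\mathbf{\hat 0}_P)$ is locally finite, possesses the least element $\mathbf{\hat 0}_P$, and is covered by the finite sets $S_1\subset S_2\subset\cdots$ (here even lower closed, which is more than needed). Hence Theorem~\ref{thm:meetcharacoro} tells us that $f$ is positive definite if and only if $(f_r*_P\mu_P)(\mathbf{\hat 0}_P,x)\geq 0$ for every $x\in P$, where $f_r(\mathbf{\hat 0}_P,x)=f(x)$. The entire task therefore collapses to evaluating the single quantity $(f_r*_P\mu_P)(\mathbf{\hat 0}_P,x)$, and the goal is to show it equals $g(x)$.

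First I would substitute the assumed form of $f$. Since $f(x)=(g_r*_P\zeta_P)(\mathbf{\hat 0}_P,x)$, the restricted incidence function $f_r$ agrees with $g_r*_P\zeta_P$ on every pair $(\mathbf{\hat 0}_P,z)$, which are exactly the pairs entering $(f_r*_P\mu_P)(\mathbf{\hat 0}_P,x)=\sum_{\mathbf{\hat 0}_P\preceq z\preceq x}f_r(\mathbf{\hat 0}_P,z)\mu_P(z,x)$. I would then invoke the defining property that $\mu_P$ is the inverse of $\zeta_P$ under $*_P$, i.e.\ $\zeta_P*_P\mu_P=\delta_P$. Using the associativity of the $P$-convolution, which is legitimate because local finiteness renders every interval, and hence every convolution sum, finite, one obtains
\[
(f_r*_P\mu_P)(\mathbf{\hat 0}_P,x)=\bigl(g_r*_P(\zeta_P*_P\mu_P)\bigr)(\mathbf{\hat 0}_P,x)=(g_r*_P\delta_P)(\mathbf{\hat 0}_P,x)=g_r(\mathbf{\hat 0}_P,x)=g(x).
\]
Because $g(x)\geq 0$ for all $x\in P$ by hypothesis, the criterion of Theorem~\ref{thm:meetcharacoro} is met, and $f$ is positive definite.

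The step I expect to require the most care is the convolution manipulation in the displayed equation, since $f_r$ is specified only on pairs of the form $(\mathbf{\hat 0}_P,\cdot)$ rather than as a full incidence function, so the appeal to associativity of $*_P$ deserves justification. To sidestep any ambiguity I would, if needed, replace that one line by an explicit double sum: expanding $f_r(\mathbf{\hat 0}_P,z)=\sum_{\mathbf{\hat 0}_P\preceq w\preceq z}g(w)$ and interchanging the order of summation (permissible since the intervals are finite) reduces the expression to $\sum_{w\preceq x}g(w)\sum_{w\preceq z\preceq x}\mu_P(z,x)$, whereupon the inner sum equals $\delta_P(w,x)$ by the defining property of the M\"obius function, leaving exactly $g(x)$. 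Either route yields the same conclusion.
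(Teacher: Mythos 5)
Your proposal is correct and follows essentially the same route as the paper: both reduce the claim to the criterion of Theorem~\ref{thm:meetcharacoro} and then observe that $(f_r*_P\mu_P)(\mathbf{\hat 0}_P,x)=g(x)\geq 0$ by M\"obius inversion. The paper simply cites the M\"obius inversion formula where you unwind the convolution identity $\zeta_P*_P\mu_P=\delta_P$ explicitly.
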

\proof The M\"{o}bius inversion formula~\cite[Proposition~3.7.1]{Stanley} implies that
\[
(f_r*_P\mu_P)(\mathbf{\hat 0}_P,x)=g(x)\geq 0,
\]
where $f_r(\mathbf{\hat 0}_P,x)=f(x)$~for all $x\in P$.
\endproof

\subsection{Properties of positive definite functions of the form $f\!:P\to\mathbb{R}$}

Positive definiteness in the sense of Definition~\ref{def:meetpd} is preserved under the following fundamental arithmetical operations.
\begin{theorem}
Let $f,g\!:P\to\mathbb{R}$ be positive definite functions. Then
\begin{itemize}
\item[(i)] $af$~is positive definite for any scalar $a\geq 0$.
\item[(ii)] $f+g$ is positive definite.
\item[(iii)] $fg$ is positive definite.
\end{itemize}
\end{theorem}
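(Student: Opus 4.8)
The plan is to reduce each of the three claims to a statement about the single meet matrix $(S)_f$ (and $(S)_g$) for an arbitrary but fixed finite nonempty set $S=\{x_1,\ldots,x_n\}\subset P$. By Definition~\ref{def:meetpd} it suffices to show that the relevant meet matrix is positive semidefinite for every such $S$. The key observation is that the meet-matrix construction interacts with the three arithmetic operations in a transparent, entry-wise fashion: since $(af)(x_i\wedge x_j)=a\,f(x_i\wedge x_j)$, $(f+g)(x_i\wedge x_j)=f(x_i\wedge x_j)+g(x_i\wedge x_j)$, and $(fg)(x_i\wedge x_j)=f(x_i\wedge x_j)\,g(x_i\wedge x_j)$, we obtain the matrix identities
\[
(S)_{af}=a\,(S)_f,\qquad (S)_{f+g}=(S)_f+(S)_g,\qquad (S)_{fg}=(S)_f\circ(S)_g,
\]
where $\circ$ denotes the Hadamard (entry-wise) product.

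With these identities in hand, parts (i) and (ii) follow immediately from elementary facts about positive semidefinite matrices. Indeed, for any $v\in\mathbb{R}^n$ we have $v^{\rm T}\bigl(a(S)_f\bigr)v=a\,\bigl(v^{\rm T}(S)_f v\bigr)\geq 0$ whenever $a\geq 0$, and $v^{\rm T}\bigl((S)_f+(S)_g\bigr)v=v^{\rm T}(S)_f v+v^{\rm T}(S)_g v\geq 0$. Thus the cone of positive semidefinite matrices is closed under nonnegative scaling and under addition, which settles (i) and (ii).

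For part (iii) I would invoke the Schur product theorem (see, e.g., \cite{HornJohnson}), which asserts that the Hadamard product of two positive semidefinite matrices is again positive semidefinite. Since both $(S)_f$ and $(S)_g$ are positive semidefinite by hypothesis, the identity $(S)_{fg}=(S)_f\circ(S)_g$ immediately yields the positive semidefiniteness of $(S)_{fg}$. As the finite set $S$ was arbitrary, Definition~\ref{def:meetpd} gives that $fg$ is positive definite.

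The only nontrivial step is (iii): the scalar and additive cases are essentially immediate, whereas the multiplicative case genuinely relies on the Schur product theorem. The main conceptual point is recognizing that pointwise multiplication of functions corresponds precisely to the \emph{Hadamard} product of the associated meet matrices, rather than to ordinary matrix multiplication; once this is observed, the classical result does the remaining work. I expect no local finiteness or least-element hypotheses to be needed, so the entire argument operates directly from Definition~\ref{def:meetpd}.
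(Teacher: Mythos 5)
Your proposal is correct and follows essentially the same route as the paper: quadratic-form arguments for nonnegative scaling and addition, and the identity $(S)_{fg}=(S)_f\circ(S)_g$ combined with the Schur product theorem for the product. The paper's proof likewise works directly from Definition~\ref{def:meetpd} with an arbitrary finite $S$ and needs no local finiteness or least-element hypotheses.
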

\proof The proofs are carried out analogously to~\cite[Theorem~4.4]{HaukkanenMattila18}. However, for completeness, we give the proofs below.

Let $f\!:P\to\mathbb{R}$ and $g\!:P\to\mathbb{R}$ be positive definite. Let $S=\{x_1,\ldots,x_n\}$ be a finite, nonempty subset of $P$ ordered such that $x_i\preceq x_j$ $\Rightarrow$ $i\leq j$.

(i) Multiplication of $f$ by a constant $a\geq 0$~preserves the positive semidefiniteness of the respective meet matrices since $x^{\rm T}(S)_{af}x=ax^{\rm T}(S)_fx\geq 0$ for all $x\in\mathbb{R}^n$.

(ii) Positive definiteness is preserved under addition of functions $f$ and $g$ since $x^{\rm T}(S)_{f+g}x=x^{\rm T}(S)_fx+x^{\rm T}(S)_gx\geq 0$ for all $x\in\mathbb{R}^n$.

(iii) In the case $fg$, the corresponding meet matrix can be written as a Hadamard product $(S)_{fg}=(S)_f\circ (S)_g$ of two positive semidefinite matrices. By the Schur product theorem~\cite[Theorem~7.5.3]{HornJohnson}, it follows that the resulting matrix is also positive semidefinite.\endproof

Positive definite functions have the following monotonicity property.

\begin{theorem}\label{thm:monotonicity}
Let $f\!:P\to\mathbb{R}$ be positive definite. Then
\begin{itemize}
\item[(i)] $f(x)\geq 0$ for all $x\in P$.
\item[(ii)] $f(x)\leq f(y)$ for $x\preceq y$, $x,y\in P$.
\end{itemize}
\end{theorem}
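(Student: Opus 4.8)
The plan is to exploit positive definiteness by evaluating the positive semidefiniteness requirement on small, explicitly chosen finite subsets $S\subset P$. Since $f$ being positive definite means that $(S)_f$ is positive semidefinite for every finite nonempty $S$, both claims should follow from inspecting meet matrices of size one and two.

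For part (i), I would take the singleton $S=\{x\}$. Because $x\wedge x=x$, the associated meet matrix is the $1\times 1$ matrix $(S)_f=[f(x)]$, and a $1\times 1$ matrix is positive semidefinite exactly when its single entry is nonnegative. Hence $f(x)\geq 0$ for all $x\in P$.

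For part (ii), I would take the two-element set $S=\{x,y\}$ with $x\preceq y$. The key observation is that $x\preceq y$ forces $x\wedge y=x$: since $x$ is itself a lower bound of both $x$ and $y$, while every common lower bound of $x$ and $y$ must lie below $x$, the element $x$ is the greatest such bound. The meet matrix is therefore $(S)_f=\begin{pmatrix} f(x) & f(x)\\ f(x) & f(y)\end{pmatrix}$. Positive semidefiniteness gives $v^{\rm T}(S)_f v\geq 0$ for all $v\in\mathbb{R}^2$; choosing the test vector $v=(1,-1)^{\rm T}$ yields $v^{\rm T}(S)_f v=f(x)-2f(x)+f(y)=f(y)-f(x)\geq 0$, which is precisely the claimed inequality $f(x)\leq f(y)$.

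I do not expect a genuine obstacle, as everything reduces to low-order principal submatrices of $(S)_f$. The one step requiring care is the identity $x\wedge y=x$ for $x\preceq y$, which must be justified directly from the definition of the meet rather than taken for granted. An alternative route for part (ii) would invoke nonnegativity of the $2\times 2$ minor, namely $f(x)f(y)-f(x)^2\geq 0$; but this forces a case split according to whether $f(x)=0$ or $f(x)>0$, whereas the test-vector computation above avoids the distinction and is more economical.
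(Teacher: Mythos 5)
Your proof is correct and takes essentially the same approach as the paper: part (i) from the singleton $S=\{x\}$ and part (ii) from the two-element set $S=\{x,y\}$ with $x\wedge y=x$. The only divergence is the last step of (ii): the paper reads off nonnegativity of the $2\times 2$ determinant $f(x)f(y)-f(x)^2$, which, as you observe, strictly requires either a case split on whether $f(x)=0$ or an appeal to part (i), whereas your test vector $(1,-1)^{\rm T}$ gives $f(y)-f(x)\geq 0$ directly and is marginally cleaner.
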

\proof (i) Let $x\in P$ be arbitrary. Then by taking the singleton $S=\{x\}$, we obtain from the definition of positive definiteness that $(S)_f=f(x)\geq 0$.

(ii) Let $x,y\in P$ be such that $x\preceq y$. Then by considering the set $S=\{x,y\}$, we obtain
\[
0\leq \left|\begin{array}{cc}f(x\wedge x)&f(x\wedge y)\\ f(x\wedge y)&f(y\wedge y)\end{array}\right| =\left|\begin{array}{cc}f(x)&f(x)\\ f(x)&f(y)\end{array}\right|=f(x)f(y)-f(x)^2
\]
and the assertion follows immediately.\endproof

\section{Multivariate functions}\label{sec:multivariate}

As a special case of the general definition given in Section~\ref{sec:poset}, we study the positive definiteness of multivariate functions $f\!:P_1\times\cdots\times P_d\to\mathbb{R}$, where $(P_i,\preceq_{P_i},\wedge_{P_i})$ is a meet semilattice for all $i\in\{1,\ldots,d\}$. We begin by inspecting the poset $(P_1\times\cdots\times P_d,{\preceq}_{P_1\times\cdots\times P_d})$, where the product order ${\preceq}_{P_1\times\cdots\times P_d}$ is defined by setting
\[
\boldsymbol{x}\,{\preceq}_{P_1\times\cdots\times P_d}\, \boldsymbol{y}\quad\Leftrightarrow\quad x_i\preceq_{P_i} y_i\quad \text{for all }i\in\{1,\ldots,d\},
\]
where we denote $\boldsymbol{x}=(x_1,\ldots,x_d)\in P_1\times\cdots\times P_d$ and $\boldsymbol{y}=(y_1,\ldots,y_d)\in P_1\times\cdots\times P_d$ as ordered tuplets. 

Let us define the pairing $(\cdot,\cdot)_{P_1\times\cdots\times P_d}$ on the set $P_1\times\cdots\times P_d$ by setting
\[
(\boldsymbol{x},\boldsymbol{y})_{P_1\times\cdots\times P_d}:=(x_1\wedge_{P_1} y_1,x_2\wedge_{P_2} y_2,\ldots,x_d\wedge_{P_d} y_d)\quad\text{for all } \boldsymbol{x},\boldsymbol{y}\in P_1\times\cdots\times P_d.
\]
It is straightforward to verify that this pairing defines the meet of the poset $(P_1\times\cdots\times P_d,\preceq_{P_1\times\cdots\times P_d})$. Let $\boldsymbol{x}=(x_1,\ldots,x_d)\in P_1\times\cdots\times P_d$~and $\boldsymbol{y}=(y_1,\ldots,y_d)\in P_1\times\cdots\times P_d$. Then
\begin{align*}
&\sup\{\boldsymbol{z}\in P_1\times\cdots\times P_d\mid \boldsymbol{z}\preceq_{P_1\times\cdots\times P_d}\boldsymbol{x}\textup{ and }\boldsymbol{z}\preceq_{P_1\times\cdots\times P_d}\boldsymbol{y}\}\\
&=\sup\{(z_1,\ldots,z_d)\in P_1\times\cdots\times P_d\mid z_i\preceq_{P_i} x_i\text{ and }z_i\preceq_{P_i} y_i~\text{for all }i\in\{1,\ldots,d\}\}\\
&=(x_1\wedge_{P_1} y_1,\ldots,x_d\wedge_{P_d} y_d)=(\boldsymbol{x},\boldsymbol{y})_{P_1\times\cdots\times P_d}.
\end{align*}
This justifies identifying $(\boldsymbol{x},\boldsymbol{y})_{P_1\times\cdots\times P_d}=\boldsymbol{x}\wedge_{P_1\times\cdots\times P_d}\boldsymbol{y}$ for $\boldsymbol{x},\boldsymbol{y}\in P_1\times\cdots\times P_d$.

We begin by inspecting the special case $d=2$ in Subsection~\ref{sec:decomps} due to its superior notational simplicity. In Subsection~\ref{sec:multivariatedecomps} we will consider the $d$-variate setting.
\subsection{Decompositions of meet matrices of the form $(S\times T)_f$}\label{sec:decomps}
By Definition~\ref{def:meetpd}, the positive definiteness of functions $f\!:P\times Q\to\mathbb{R}$~can be established by considering meet matrices $A=(S\times T)_f$ with
\begin{align}
A_{i,j}=f(\boldsymbol{x}_i\wedge_{P\times Q} \boldsymbol{x}_j),\quad i,j\in\{1,\ldots,n\},\label{equ:meet2}
\end{align}
where $S\times T=\{\boldsymbol{x}_1,\ldots,\boldsymbol{x}_n\}$ is a subset of the poset $(P\times Q,\preceq_{P\times Q})$ ordered such that $\boldsymbol{x}_i\preceq_{P\times Q}\boldsymbol{x}_j$~$\Rightarrow$~$i\leq j$.

\begin{remark}\label{rem:multiindices}
If $S=\{x_1,\ldots,x_n\}$ and $T=\{y_1,\ldots,y_m\}$, then $$\boldsymbol{x}_i=(x_{1+\lfloor (i-1)/m\rfloor},y_{1+{\rm mod}(i-1,m)})$$ for $i\in\{1,\ldots,nm\}$. This is also the lexicographic ordering of the elements in $S\times T$. A connection between the Kronecker product and the lexicographic ordering will be used in Proposition~\ref{prop:decomp} to derive an $LDL^{\rm T}$ decomposition for matrices of the form~\eqref{equ:meet2}.
\end{remark}

We review some basic properties of meet matrices of the form $(S\times T)_f$. The following well known result applies to the M\"{o}bius function of a Cartesian product.
\begin{lemma}[cf.~{\cite[Proposition~3.8.2]{Stanley}}]
Let $(P,\preceq_P)$ and $(Q,\preceq_Q)$ be locally finite posets. The M\"{o}bius function of the poset $(P\times Q,\preceq_{P\times Q})$ is given by
\[
\mu_{P\times Q}(\boldsymbol{x},\boldsymbol{y})=\mu_P(x_1,y_1)\mu_Q(x_2,y_2),\quad \boldsymbol{x},\boldsymbol{y}\in P\times Q.
\]
\end{lemma}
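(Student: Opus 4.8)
The plan is to verify directly that the proposed product formula defines the inverse of $\zeta_{P\times Q}$ under the convolution $*_{P\times Q}$; since the M\"{o}bius function is by definition this (unique) inverse, that will suffice. First I would introduce the candidate function $\nu(\mathbf{x},\mathbf{y})=\mu_P(x_1,y_1)\mu_Q(x_2,y_2)$ and observe that the local finiteness of $P$ and $Q$ guarantees that $P\times Q$ is itself locally finite: the interval $[\mathbf{x},\mathbf{y}]_{P\times Q}$ is empty unless $\mathbf{x}\preceq_{P\times Q}\mathbf{y}$, in which case it is finite. Hence $\mu_{P\times Q}$ exists and all the convolution sums below are well defined.

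The key computation is to evaluate $(\nu *_{P\times Q}\zeta_{P\times Q})(\mathbf{x},\mathbf{y})$. For $\mathbf{x}\preceq_{P\times Q}\mathbf{y}$ (both sides vanish otherwise), every $\mathbf{z}$ in the summation range satisfies $\mathbf{z}\preceq_{P\times Q}\mathbf{y}$, so $\zeta_{P\times Q}(\mathbf{z},\mathbf{y})=1$ and the convolution reduces to $\sum_{\mathbf{x}\preceq_{P\times Q}\mathbf{z}\preceq_{P\times Q}\mathbf{y}}\mu_P(x_1,z_1)\mu_Q(x_2,z_2)$, the sum running over $\mathbf{z}=(z_1,z_2)$. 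The crucial observation -- and the only genuine content of the argument -- is that by the definition of the product order this interval factors as a Cartesian product, $[\mathbf{x},\mathbf{y}]_{P\times Q}=[x_1,y_1]_P\times[x_2,y_2]_Q$, so the two coordinates $z_1$ and $z_2$ range independently and the sum separates into a product of two sums over the coordinate intervals.

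Having factored the sum, I would apply the defining relation $\mu_P*_P\zeta_P=\delta_P$ in each coordinate, which yields $\sum_{x_1\preceq_P z_1\preceq_P y_1}\mu_P(x_1,z_1)=\delta_P(x_1,y_1)$ and likewise for $Q$. Multiplying these gives $\delta_P(x_1,y_1)\,\delta_Q(x_2,y_2)=\delta_{P\times Q}(\mathbf{x},\mathbf{y})$, so that $\nu *_{P\times Q}\zeta_{P\times Q}=\delta_{P\times Q}$. By uniqueness of the inverse in the incidence algebra, $\nu=\mu_{P\times Q}$, which is precisely the claimed formula. I do not anticipate any real obstacle: the entire argument hinges on the interval factorization, and once that is established the remaining steps are a routine coordinatewise application of the M\"{o}bius defining property. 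The only point requiring a word of care is the preliminary check that the product poset is locally finite, so that $\mu_{P\times Q}$ and all the sums involved are meaningful.
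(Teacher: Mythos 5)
Your proposal is correct. The paper does not prove this lemma at all --- it simply cites it as a known result (Stanley, Proposition~3.8.2) --- and your argument (verify that $\nu(\mathbf{x},\mathbf{y})=\mu_P(x_1,y_1)\mu_Q(x_2,y_2)$ is the convolution inverse of $\zeta_{P\times Q}$ via the interval factorization $[\mathbf{x},\mathbf{y}]_{P\times Q}=[x_1,y_1]_P\times[x_2,y_2]_Q$) is precisely the standard proof of that cited proposition, so there is nothing substantive to compare.
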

In the following proposition we present the well-known factorization~\cite[Theorem~12]{Bhat91} in the case that the underlying poset is a Cartesian product of posets. This form supports our study of multivariate functions. For completeness, we  give a short proof below.
\begin{proposition}\label{prop:decomp}
Let $(P,\preceq_P,\wedge_P)$ and $(Q,\preceq_Q,\wedge_Q)$ be locally finite meet semilattices and suppose that $S=\{x_1,\ldots,x_n\}\subset P$ and $T=\{y_1,\ldots,y_m\}\subset Q$ are meet closed sets ordered such that $x_i\preceq_{P} x_j$~$\Rightarrow$~$i\leq j$ and $y_i\preceq_{Q} y_j$~$\Rightarrow$~$i\leq j$, respectively. Then 
\[
(S\times T)_f=(E\otimes F)\Lambda(E\otimes F)^\textup{T},
\]
where $\otimes$ denotes the Kronecker product, $E$~is the $n\times n$ matrix and $F$ is the $m\times m$ matrix defined by setting
\[
E_{i,j}=\begin{cases}1,&\text{if }x_j\preceq_P x_i\\ 0&\text{otherwise},\end{cases}\quad \text{and}\quad F_{i,j}=\begin{cases}1,&\text{if }y_j\preceq_Q y_i\\ 0&\text{otherwise},\end{cases}
\]
and $\Lambda={\rm diag}(c(1+\lfloor \frac{i-1}{m}\rfloor,1+{\rm mod}(i-1,m)))_{i=1}^{nm}$, where
\[
c(i,j)=\sum_{x_k\preceq_P x_i\text{ and }y_\ell\preceq_Q y_j}f(x_k,y_\ell)\mu_S(x_k,x_i)\mu_T(y_\ell,y_j).
\]
\end{proposition}
\proof Let us enumerate the rows and columns of the $nm\times nm$ Kronecker product $E\otimes F$ by the multi-indices $\boldsymbol{i},\boldsymbol{j}\in\{1,\ldots,n\}\times\{1,\ldots,m\}$ in lexicographic order, see Table~\ref{tab:kron} for an illustration. Using this convention, the elements of the Kronecker product can be expressed concisely as
\[
(E\otimes F)_{\boldsymbol{i},\boldsymbol{j}}=E_{i_1,j_1}F_{i_2,j_2},\quad \boldsymbol{i},\boldsymbol{j}\in\{1,\ldots,n\}\times\{1,\ldots,m\},
\]
where $\boldsymbol{i}=(i_1,i_2)$~and $\boldsymbol{j}=(j_1,j_2)$ are ordered pairs. By Remark~\ref{rem:multiindices}, we can number the corresponding elements of $S\times T$ by $\boldsymbol{x}_{\boldsymbol{k}}=(x_{k_1},y_{k_2})$ for $\boldsymbol{k}=(k_1,k_2)\in\{1,\ldots,n\}\times\{1,\ldots,m\}$. We also denote $\Lambda_{\boldsymbol{i}}=c(i_1,i_2)$ for $\boldsymbol{i}=(i_1,i_2)\in\{1,\ldots,n\}\times\{1,\ldots,m\}$.

Let us recall that by the M\"{o}bius inversion~\cite[Proposition~3.7.1]{Stanley} we now have
\begin{align}
\Lambda_{\boldsymbol{i}}=\sum_{\boldsymbol{k}: \boldsymbol{x}_{\boldsymbol{k}}\preceq_{P\times Q} \boldsymbol{x}_{\boldsymbol{i}}} f(\boldsymbol{x}_{\boldsymbol{k}})\mu_{S\times T}(\boldsymbol{x}_{\boldsymbol{k}},\boldsymbol{x}_{\boldsymbol{i}})\quad\Leftrightarrow\quad f(\boldsymbol{x}_{\boldsymbol{i}})=\sum_{\boldsymbol{k}:\boldsymbol{x}_{\boldsymbol{k}}\preceq_{P\times Q} \boldsymbol{x}_{\boldsymbol{i}}} \Lambda_{\boldsymbol{k}}\label{equ:mobius}.
\end{align}
Hence
\begin{align*}
&((E\otimes F)\Lambda(E\otimes F)^{\rm T})_{\boldsymbol{i},\boldsymbol{j}}\\
&=\sum_{\boldsymbol{k}\in\{1,\ldots,n\}\times\{1,\ldots,m\}}\Lambda_{\boldsymbol{k}}E_{i_1,k_1}F_{i_2,k_2}E_{j_1,k_1}F_{j_2,k_2}\\
&=\sum_{\boldsymbol{k}\in\{1,\ldots,n\}\times\{1,\ldots,m\}}\Lambda_{\boldsymbol{k}}\zeta_P(x_{k_1},x_{i_1})\zeta_Q(y_{k_2},y_{i_2})\zeta_P(x_{k_1},x_{j_1})\zeta_Q(y_{k_2},y_{j_2})\\
&=\sum_{\boldsymbol{k}\in\{1,\ldots,n\}\times\{1,\ldots,m\}}\Lambda_{\boldsymbol{k}}\zeta_P(x_{k_1},x_{i_1}\wedge_{P} x_{j_1})\zeta_Q(y_{k_2},y_{i_2}\wedge_{Q} y_{j_2}),
\intertext{where the last equality follows from the universal property $x\preceq y,z\Leftrightarrow x\preceq y\wedge z$. Now}
&((E\otimes F)\Lambda(E\otimes F)^{\rm T})_{\boldsymbol{i},\boldsymbol{j}}\\
&=\sum_{\boldsymbol{k}\in\{1,\ldots,n\}\times\{1,\ldots,m\}}\Lambda_{\boldsymbol{k}}\zeta_{P\times Q}((x_{k_1},y_{k_2}),(x_{i_1}\wedge_P x_{j_1},y_{i_2}\wedge_Q y_{j_2}))\\
&=\sum_{\boldsymbol{k}\in\{1,\ldots,n\}\times\{1,\ldots,m\}}\Lambda_{\boldsymbol{k}}\zeta_{P\times Q}(\boldsymbol{x}_{\boldsymbol{k}},\boldsymbol{x}_{\boldsymbol{i}}\wedge_{P\times Q}\boldsymbol{x}_{\boldsymbol{j}})\\
&=\sum_{\boldsymbol{k}:\boldsymbol{x}_{\boldsymbol{k}}\preceq_{P\times Q}\boldsymbol{x}_{\boldsymbol{i}}\wedge_{P\times Q}\boldsymbol{x}_{\boldsymbol{j}}}\Lambda_{\boldsymbol{k}}=f(\boldsymbol{x}_{\boldsymbol{i}}\wedge_{P\times Q}\boldsymbol{x}_{\boldsymbol{j}}),
\end{align*}
where the final equality follows from~\eqref{equ:mobius}.\endproof

\begin{table}[!t]
\begin{center}
$\begin{array}{cccccccc}
\substack{\text{column/row}\\ \text{index}}&(1,1)&(1,2)&\cdots&(1,m)&(2,1)&\cdots&(n,m)\\
\vspace{-.2cm}\\
(1,1)&E_{1,1}F_{1,1}&E_{1,1}F_{1,2}&\cdots&E_{1,1}F_{1,n}&E_{1,2}F_{1,1}&\cdots&E_{1,n}F_{1,m}\\
(1,2)&E_{1,1}F_{2,1}&E_{1,1}F_{2,2}&\cdots&E_{1,1}F_{2,n}&E_{1,2}F_{2,1}&\cdots&E_{1,n}F_{2,m}\\
\vdots&\vdots&\vdots&\ddots&\vdots&\vdots&\ddots&\vdots\\
(1,m)&E_{1,1}F_{m,1}&E_{1,1}F_{m,2}&\cdots&E_{1,1}F_{m,m}&E_{1,2}F_{m,1}&\cdots&E_{1,n}F_{m,m}\\
(2,1)&E_{2,1}F_{1,1}&E_{2,1}F_{1,2}&\cdots&E_{2,1}F_{1,n}&E_{2,2}F_{1,1}&\cdots&E_{2,n}F_{1,m}\\
\vdots&\vdots&\vdots&\ddots&\vdots&\vdots&\ddots&\vdots\\
(n,m)&E_{n,1}F_{m,1}&E_{n,1}F_{m,2}&\cdots&E_{n,1}F_{m,m}&E_{n,2}F_{m,1}&\cdots&E_{n,n}F_{m,m}
\end{array}$
\end{center}
\caption{Enumeration of the columns and rows of the Kronecker product $E\otimes F$ by the multi-indices $\boldsymbol{i},\boldsymbol{j}\in \{1,\ldots,n\}\times\{1,\ldots,m\}$ in lexicographic order.}\label{tab:kron}
\end{table}

\begin{remark}[cf.~{\cite[Proposition~2.4]{KorkeePhd}}]
If $S$ and $T$ are lower closed, then $\mu_{S\times T}$ coincides with the M\"{o}bius function $\mu_{P\times Q}$ of $(P\times Q,\preceq_{P\times Q})$. Otherwise it holds for meet closed $S$ and $T$ that
\[
\mu_{S\times T}(\boldsymbol{x}_i,\boldsymbol{x}_j)=\sum_{\substack{\boldsymbol{z}\preceq_{P\times Q} \boldsymbol{x}_j\\ \boldsymbol{z}\not\preceq \boldsymbol{x}_k,\ k<j}}\mu_{P\times Q}(\boldsymbol{x}_i,\boldsymbol{z}).
\]
\end{remark}

\begin{corollary} If in addition to the assumptions of Proposition~\ref{prop:decomp} it holds that $f(x,y)=g(x)g(y)$, then
\[
(S\times S)_f=(E\otimes E)(\Lambda\otimes\Lambda)(E\otimes E)^{\rm T},
\]
where $\Lambda={\rm diag}(c_1,\ldots,c_n)$ with
\[
c_i=\sum_{x_k\preceq_P x_i} g(x_k)\mu_S(x_k,x_i)\quad\text{for }i\in\{1,\ldots,n\}.
\]
\end{corollary}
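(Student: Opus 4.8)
The plan is to specialize Theorem~\ref{prop:decomp} to the case $P=Q$, $S=T$ (so that $E=F$ and $\mu_S=\mu_T$) and then exploit the multiplicative structure $f(x,y)=g(x)g(y)$ to factorize the diagonal matrix appearing in that decomposition into a Kronecker product. To keep the two roles of $\Lambda$ separate, I would temporarily write $\Lambda'$ for the diagonal matrix produced by Theorem~\ref{prop:decomp} and reserve $\Lambda$ for the $n\times n$ matrix ${\rm diag}(c_1,\ldots,c_n)$ appearing in the statement.

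First I would record the conclusion of Theorem~\ref{prop:decomp} under these hypotheses, namely $(S\times S)_f=(E\otimes E)\Lambda'(E\otimes E)^{\rm T}$, where $\Lambda'$ is the diagonal matrix whose entry at the lexicographically ordered multi-index $(i,j)$ equals $c(i,j)$. Substituting $f(x_k,x_\ell)=g(x_k)g(x_\ell)$ into the defining sum yields
\[
c(i,j)=\sum_{x_k\preceq_P x_i,\ x_\ell\preceq_P x_j} g(x_k)g(x_\ell)\mu_S(x_k,x_i)\mu_S(x_\ell,x_j).
\]
Since each summand is a product of a factor depending only on $(k,i)$ and a factor depending only on $(\ell,j)$, the double sum separates into the product of two single sums, giving $c(i,j)=c_ic_j$ with $c_i$ as defined in the statement.

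Next I would identify the resulting diagonal matrix with $\Lambda\otimes\Lambda$. By Remark~\ref{rem:multiindices}, the diagonal entries of $\Lambda'$ are indexed in lexicographic order, so the entry in position $(i,j)$ is $c_ic_j$. On the other hand, the Kronecker product of two diagonal matrices is again diagonal, and the diagonal entry of $\Lambda\otimes\Lambda$ at the lexicographic multi-index $(i,j)$ is exactly $\Lambda_{i,i}\Lambda_{j,j}=c_ic_j$. Hence $\Lambda'=\Lambda\otimes\Lambda$, and substituting this back into the decomposition of $(S\times S)_f$ gives the claimed identity.

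The computations are entirely routine; the only point requiring care is the bookkeeping of the lexicographic indexing, to confirm that the diagonal matrix with entries $c_ic_j$ is genuinely the Kronecker product $\Lambda\otimes\Lambda$ rather than some other reshuffling of the $c_ic_j$. Once the index conventions of Theorem~\ref{prop:decomp} and Remark~\ref{rem:multiindices} are respected, this identification is immediate from the block structure of the Kronecker product, and no further analytic input is needed.
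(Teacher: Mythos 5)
Your proof is correct and follows exactly the route the paper intends: the corollary is stated without proof as an immediate specialization of Theorem~\ref{prop:decomp}, and your factorization $c(i,j)=c_ic_j$ together with the lexicographic identification $\Lambda'=\Lambda\otimes\Lambda$ is precisely the omitted argument.
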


We end this section by giving a connection between factorable functions (cf.~\cite[pg.~304]{Mc})  and the notion of positive definiteness.

\begin{definition}\label{def:factorable}
A function $f\!:P\times P\to\mathbb{R}$ is called {\it factorable} if it can be written as $f(x\wedge y,z\wedge w)=g(x\wedge z)h(y\wedge w)$ for some functions $g\!:P\to\mathbb{R}$ and $h\!:P\to\mathbb{R}$.
\end{definition}
\begin{lemma} Let $f\!:P\times P\to\mathbb{R}$ be a factorable function given as in Definition~\ref{def:factorable}. Then $f$ is positive definite if $g$ and $h$ are positive definite.
\end{lemma}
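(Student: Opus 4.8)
The plan is to reduce the factorability hypothesis to a single–variable identity and then recognise every meet matrix of $f$ on the semilattice $P\times P$ as (a congruence of) a meet matrix of the pointwise product $gh$. First I would specialise the defining identity of Definition~\ref{def:factorable}: setting $y=x$ and $w=z$ forces $x\wedge y=x$ and $z\wedge w=z$, while $y\wedge w=x\wedge z$, so that
\[
f(x,z)=g(x\wedge z)\,h(x\wedge z)\qquad\text{for all }x,z\in P.
\]
In other words $f(s,t)=(gh)(s\wedge t)$, where $gh$ is the pointwise product of $g$ and $h$ on $P$. This is the only consequence of factorability that the argument will use, so the full four–variable identity need not be exploited; in particular this specialisation is what resolves the apparent ``crossed'' asymmetry of the definition.

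Next I would take an arbitrary finite nonempty set $S=\{(a_1,b_1),\dots,(a_n,b_n)\}\subset P\times P$ and compute the entries of the meet matrix $(S)_f$ with respect to the product order. Using $(a_i,b_i)\wedge_{P\times P}(a_j,b_j)=(a_i\wedge a_j,\,b_i\wedge b_j)$ together with the identity above and the commutativity and associativity of $\wedge$, one gets
\[
[(S)_f]_{i,j}=(gh)\big((a_i\wedge a_j)\wedge(b_i\wedge b_j)\big)=(gh)(c_i\wedge c_j),\qquad c_i:=a_i\wedge b_i .
\]
Thus $(S)_f$ is exactly the meet matrix of the single–variable function $gh$ evaluated on the tuple $(c_1,\dots,c_n)$ of elements $c_i=a_i\wedge b_i\in P$.

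Finally I would invoke the positive definiteness of $g$ and $h$. Since the product of two positive definite functions is again positive definite (which itself follows from the Schur product theorem \cite[Theorem~7.5.3]{HornJohnson}, as established above), the function $gh$ is positive definite on $P$. Hence the meet matrix of $gh$ on the distinct values among $c_1,\dots,c_n$ is positive semidefinite, and $(S)_f$ is recovered from it by the $0$--$1$ congruence that duplicates the rows and columns corresponding to coincident $c_i$; such a congruence preserves positive semidefiniteness. Therefore $(S)_f$ is positive semidefinite for every finite $S\subset P\times P$, and $f$ is positive definite in the sense of Definition~\ref{def:meetpd}.

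The only genuinely delicate point is the bookkeeping in the middle step: one must check that the iterated meet $(a_i\wedge a_j)\wedge(b_i\wedge b_j)$ really collapses to $c_i\wedge c_j$, and that possible repetitions among the $c_i$ (which can occur even when $S$ consists of distinct pairs) do not obstruct the identification with a bona fide meet matrix of $gh$. Both are routine: the first holds by associativity and commutativity of the meet, and the second because a symmetric congruence by a $0$--$1$ selection matrix sends positive semidefinite matrices to positive semidefinite matrices.
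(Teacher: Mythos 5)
Your argument is internally consistent with the letter of Definition~\ref{def:factorable}, but the step on which everything rests is exactly where it diverges from (and becomes incompatible with) the paper's proof. Reading the identity $f(x\wedge y,z\wedge w)=g(x\wedge z)h(y\wedge w)$ literally and specialising $y=x$, $w=z$ does give $f(x,z)=(gh)(x\wedge z)$, as you say; but specialising instead $z=x$, $w=y$ gives $f(x\wedge y,x\wedge y)=g(x)h(y)$, and comparing the two consequences forces $g(x)h(y)=g(x\wedge y)h(x\wedge y)$ for all $x,y$. So under the literal reading the class of factorable functions with non-degenerate $g$ and $h$ is essentially empty, and your reduction, while formally valid, proves the lemma only for that degenerate class. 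The ``crossed'' identity is best understood as a misprint: the definition consistent with the cited notion in \cite{Mc} and with the paper's own proof is $f(x\wedge y,z\wedge w)=g(x\wedge y)h(z\wedge w)$, i.e.\ $f(m,n)=g(m)h(n)$. Under that reading your key identity $f(s,t)=(gh)(s\wedge t)$ is false --- $f(x,z)=g(x)h(z)$ is not a function of $x\wedge z$ --- and the identification of $(S)_f$ with a meet matrix of the one-variable product $gh$ collapses entirely.

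The paper's proof takes a genuinely different route, which is the one this lemma is built for: with $S=\{x_1,\ldots,x_n\}$ and $T=\{y_1,\ldots,y_m\}$, the entry of $(S\times T)_f$ indexed by $((i,j),(k,l))$ is $f(x_i\wedge x_k,\,y_j\wedge y_l)=g(x_i\wedge x_k)h(y_j\wedge y_l)$, so that $(S\times T)_f=(S)_g\otimes(T)_h$, and the Kronecker product of positive semidefinite matrices is positive semidefinite. (An arbitrary finite $U\subset P\times P$ is contained in such a product set $S\times T$, so $(U)_f$ is a principal submatrix of a positive semidefinite matrix --- a point the paper leaves implicit and which your proof, to its credit, would have handled directly.) Your peripheral observations --- that repetitions among the $c_i$ are harmless because a symmetric $0$--$1$ congruence preserves positive semidefiniteness, and that $gh$ is positive definite by the Schur product theorem --- are correct, but they are not what this lemma needs; the essential content is the Kronecker factorisation, which your reduction discards.
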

\proof Let $S=\{x_1,\ldots,x_n\}\subset P$~and $T=\{y_1,\ldots,y_m\}\subset Q$. Now
\[
(S\times T)_f=(S)_g\otimes (T)_h,
\]
where $\otimes$~denotes the Kronecker product. It is well known that the Kronecker product of positive semidefinite matrices is positive semidefinite and the assertion follows. \endproof

\subsection{Decompositions of meet matrices of the form $(S_1\times\cdots\times S_d)_f$}\label{sec:multivariatedecomps} Recursive application of Proposition~\ref{prop:decomp} can be used to yield the following generalized matrix decomposition.

\begin{proposition}\label{prop:decomp2}
Let $(P_i,\preceq_{P_i},\wedge_{P_i})$ be locally finite meet semilattices and suppose that $S_i=\{x_1^{(i)},\ldots,x_{n_i}^{(i)}\}\subset P_i$ are finite meet closed sets ordered such that $x_j^{(i)}\preceq x_k^{(i)}$~$\Rightarrow$~$j\leq k$ for all $i\in\{1,\ldots,d\}$. Then 
\[
(S_1\times\cdots\times S_d)_f=(E^{(1)}\otimes \cdots\otimes E^{(d)})\Lambda(E^{(1)}\otimes\cdots\otimes E^{(d)})^\textup{T},
\]
where $\otimes$ denotes the Kronecker product. Here, the $n_i\times n_i$~matrices $E^{(i)}$~are defined by setting
\[
E_{j,k}^{(i)}=\begin{cases}1,&\text{if }x_k^{(i)}\preceq_{P_i} x_j^{(i)}\\ 0&\text{otherwise},\end{cases}
\]
for all $i\in\{1,\ldots,d\}$. In addition, $\Lambda$ is a diagonal matrix with the diagonal elements 
\[
\sum_{\boldsymbol{k}:\boldsymbol{x}_{\boldsymbol{k}}\preceq_{P_1\times\cdots\times P_d}\boldsymbol{x}_{\boldsymbol{i}}}f(\boldsymbol{x}_{\boldsymbol{k}})\mu_{S_1\times\cdots\times S_d}(\boldsymbol{x}_{\boldsymbol{k}},\boldsymbol{x}_{\boldsymbol{i}}),
\]
where we set $\boldsymbol{x}_{\boldsymbol{k}}=(x_{k_1}^{(1)},\ldots,x_{k_d}^{(d)})$ and the multi-indices $\boldsymbol{i}\in\{1,\ldots,n_1\}\times\cdots\times\{1,\ldots,n_d\}$ are enumerated according to the lexicographic order.\end{proposition}

Proposition~\ref{prop:decomp2} is another special instance of~\cite[Theorem~12]{Bhat91}.

\section{Multivariate arithmetic functions $f\!:\mathbb{Z}_+^d\to\mathbb{R}$}\label{sec:multivariatearithmetic}
Let us begin by defining an extended GCD operator $(\cdot,\cdot)_d\!:\mathbb{Z}_+^d\times\mathbb{Z}_+^d\to\mathbb{Z}_+^d$ by setting
\[
(\boldsymbol{x},\boldsymbol{y})_d:=(\gcd(x_1,y_1),\ldots,\gcd(x_d,y_d))
\]
for all $\boldsymbol{x}=(x_1,\ldots,x_d)\in\mathbb{Z}_+^d$ and $\boldsymbol{y}=(y_1,\ldots,y_d)\in\mathbb{Z}_+^d$.

The extended GCD operator can be used to give a definition for the positive definiteness of multivariate arithmetic functions as follows.
\begin{definition}
A function $f\!:\mathbb{Z}_+^d\to\mathbb{R}$ is positive definite if the matrix $$(f((\boldsymbol{x},\boldsymbol{y})_d))_{\boldsymbol{x}\in S,\boldsymbol{y}\in S}$$ is positive semidefinite for all finite $S\subset \mathbb{Z}_+^d$, $S\neq \varnothing$.
\end{definition}

This definition can be expressed in terms of generalized Smith matrices.

\begin{theorem}\label{thm:chara}
A function $f\!:\mathbb{Z}_+^d\to\mathbb{R}$ is positive definite if and only if the matrix $(\{1,\ldots,m\}^d)_f$ is positive semidefinite for all $m\in\mathbb{Z}_+$.
\end{theorem}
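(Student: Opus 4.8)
The plan is to recognize Theorem~\ref{thm:chara} as the specialization of Theorem~\ref{thm:meetchara} to a concrete meet semilattice, so that almost all of the work reduces to checking that the hypotheses of Theorem~\ref{thm:meetchara} are met. Concretely, I would take $P=\mathbb{Z}_+^d$ equipped with the product of $d$ copies of the divisibility order $(\mathbb{Z}_+,|)$, i.e.\ $\mathbf{x}\preceq\mathbf{y}$ if and only if $x_i\mid y_i$ for all $i\in\{1,\ldots,d\}$, and with meet given by the extended GCD operator $(\cdot,\cdot)_d$. The covering would be the nested family $S_m=\{1,\ldots,m\}^d$, $m\in\mathbb{Z}_+$.

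First I would verify that $(\mathbb{Z}_+^d,\preceq,(\cdot,\cdot)_d)$ is a locally finite meet semilattice. Since $(\mathbb{Z}_+,|)$ is locally finite (the interval $[x,y]$ consists of the divisors of $y$ that are multiples of $x$, a finite set) and a finite product of locally finite posets is again locally finite (the interval $[\mathbf{x},\mathbf{y}]$ factors as the product of the coordinate intervals, hence is finite), the local finiteness of $\mathbb{Z}_+^d$ follows. That $(\cdot,\cdot)_d$ is the meet of the product order is the coordinatewise statement that $\gcd(x_i,y_i)$ is the greatest lower bound of $x_i$ and $y_i$ in $(\mathbb{Z}_+,|)$, combined with the computation of the meet of a product poset already carried out in Section~\ref{sec:multivariate}.

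Next I would check the covering hypothesis. Each $S_m=\{1,\ldots,m\}^d$ is finite and nonempty, and the family is nested, $S_1\subset S_2\subset\cdots$. Any $\mathbf{x}=(x_1,\ldots,x_d)\in\mathbb{Z}_+^d$ belongs to $S_m$ for $m=\max_i x_i$, so $\mathbb{Z}_+^d=\bigcup_{m=1}^\infty S_m$. I would also note that the meet matrix $(S_m)_f$ in the sense of the general definition, with entries $f(\mathbf{x}\wedge\mathbf{y})=f((\mathbf{x},\mathbf{y})_d)$, is exactly the generalized Smith matrix $(\{1,\ldots,m\}^d)_f$ appearing in the statement, and that the definition of positive definiteness for $f\!:\mathbb{Z}_+^d\to\mathbb{R}$ coincides with Definition~\ref{def:meetpd} applied to this semilattice.

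With these identifications in place, Theorem~\ref{thm:meetchara} applies verbatim: $f$ is positive definite if and only if $(S_m)_f$ is positive semidefinite for every $m\in\mathbb{Z}_+$, which is precisely the asserted equivalence. Since the genuine content has already been established in Theorem~\ref{thm:meetchara}, I do not expect a substantial obstacle here; the only points requiring care are the bookkeeping of the preceding paragraphs — in particular confirming that the extended GCD operator really is the semilattice meet and that $\{1,\ldots,m\}^d$ is a bona fide covering by finite sets — rather than any new positivity argument.
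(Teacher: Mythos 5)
Your proposal is correct and follows exactly the paper's route: the paper also proves this by observing that $S_m=\{1,\ldots,m\}^d$ is a covering of $\mathbb{Z}_+^d$ by finite nonempty sets and invoking Theorem~\ref{thm:meetchara}. Your additional verifications (local finiteness of the product order, identification of $(\cdot,\cdot)_d$ with the meet) are sound bookkeeping that the paper leaves implicit.
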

\begin{proof} The assertion follows immediately from Theorem~\ref{thm:meetchara} since the sets $S_m=\{1,\ldots,m\}^d$, $m\in\mathbb{Z}_+$, constitute a covering for $\mathbb{Z}_+^d$.
\end{proof}

In the following, the {\it Dirichlet convolution} of $f\!:\mathbb{Z}_+^d\to\mathbb{R}$ and $g\!:\mathbb{Z}_+^d\to\mathbb{R}$ is defined as
\[
(f*_dg)(i_1,\ldots,i_d)=\sum_{\substack{k_j|i_j\\ j\in\{1,\ldots,d\}}}f(k_1,\ldots,k_d)g\big(\frac{i_1}{k_1},\ldots,\frac{i_d}{k_d}\big).
\]
The identity under $*_d$ is
\[
\delta_d(i_1,\ldots,i_d)=\delta(i_1)\cdots\delta(i_d),
\]
where $\delta(1)=1$~and $\delta(k)=0$ otherwise.

Let $\mu$ denote the arithmetic M\"{o}bius function defined by setting
\[
\mu(n)=\begin{cases}
1,&\text{if $n=1$,}\\
(-1)^m,&\text{if $n$ is the product of $m$ distinct primes},\\
0&\text{otherwise}.
\end{cases}
\]
Now by defining $\mu_d$~ as
\[
\mu_d(i_1,\ldots,i_d)=\mu(i_1)\cdots\mu(i_d)
\]
and letting $\zeta_d$~be defined as $\zeta_d(i_1,\ldots,i_d)=1$ for all $i_1,\ldots,i_d\in\mathbb{Z}_+$, we have
\[
\mu_d*_d\zeta_d=\delta_d.
\]

\begin{theorem}\label{thm:chara2} Let $d\geq 1$.
{\begin{itemize}
\item[(i)]A function $f\!:\mathbb{Z}_+^d\to\mathbb{R}$ is positive definite if and only if
\[
(f*_d\mu_d)(i_1,\ldots,i_d)\geq 0\quad\text{for all }i_1,\ldots,i_d\in\mathbb{Z}_+.
\] 
\item[(ii)]Let $f(x_1,x_2,\ldots,x_d)=g_1(x_1)g_2(x_2)\cdots g_d(x_d)$. Then $f$ is positive definite if and only if there exists $\mathcal{I}\subset\{1,\ldots,d\}$, where $\# \mathcal{I}$ is even, such that
\begin{align*}
&(g_i*_1\mu)(j) \leq 0\quad\text{for all }i\in \mathcal{I}\text{ and }j\in\mathbb{Z}_+\\
&\text{and}\quad(g_i*_1\mu)(j) \geq 0\quad\text{for all }i\not\in \mathcal{I},\ j\in\mathbb{Z}_+.
\end{align*}
\end{itemize}}
\end{theorem}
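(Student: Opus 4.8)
The plan is to obtain both parts as arithmetic specializations of the general order-theoretic characterization in Theorem~\ref{thm:meetcharacoro}, so the first task is to match up the two languages.

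For part~(i), I would first record that $\mathbb{Z}_+^d$ under componentwise divisibility is a locally finite meet semilattice: its meet is precisely the extended GCD operator $(\cdot,\cdot)_d$, and its least element is $\mathbf{1}=(1,\ldots,1)$. The finite sets $S_m=\{1,\ldots,m\}^d$ already used in Theorem~\ref{thm:chara} form a covering of $\mathbb{Z}_+^d$, so all hypotheses of Theorem~\ref{thm:meetcharacoro} hold. The only thing left is to verify that the abstract condition translates to the Dirichlet statement. Because the M\"obius function on each interval $[\mathbf{k},\mathbf{n}]$ of this poset factors as $\prod_{i=1}^d\mu(n_i/k_i)$, the poset convolution of $f_r$ against $\mu_P$ coincides with the Dirichlet convolution $f*_d\mu_d$; that is,
\[
(f_r*_P\mu_P)(\mathbf{1},\mathbf{n})=(f*_d\mu_d)(\mathbf{n})\quad\text{for all }\mathbf{n}\in\mathbb{Z}_+^d.
\]
Part~(i) then follows immediately from Theorem~\ref{thm:meetcharacoro}.

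For part~(ii), the first step is to exploit the product form of $f$. A direct computation of the convolution, using the factorization $\mu_d(n_1,\ldots,n_d)=\mu(n_1)\cdots\mu(n_d)$, separates the sum over divisors coordinatewise and yields
\[
(f*_d\mu_d)(i_1,\ldots,i_d)=\prod_{j=1}^d(g_j*_1\mu)(i_j).
\]
Writing $h_j:=g_j*_1\mu$ and invoking part~(i), positive definiteness of $f$ is thus equivalent to the requirement that $\prod_{j=1}^d h_j(i_j)\geq 0$ for every tuple $(i_1,\ldots,i_d)\in\mathbb{Z}_+^d$. The problem has now been reduced to a purely combinatorial question about the signs of independent univariate functions over the grid.

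The heart of the proof is this sign analysis. I would argue that each $h_j$ must be of a single sign: if some $h_j$ attained both a strictly positive and a strictly negative value, then fixing the remaining coordinates at points where the other factors are nonzero would force the product to be simultaneously nonnegative and nonpositive, which is impossible unless one of the remaining factors vanishes identically. Setting $\mathcal{I}=\{j:h_j\leq 0\}$, evaluating the product at a tuple where every factor is nonzero shows that its sign equals $(-1)^{\#\mathcal{I}}$, so nonnegativity forces $\#\mathcal{I}$ to be even; conversely, an even number of nonpositive factors makes every product nonnegative, giving the stated equivalence. The step demanding the most care -- and the main obstacle -- is the bookkeeping of the universal quantifier over all tuples together with the degenerate cases in which some $h_j$ (equivalently some $g_j$, since $\mu$ is invertible under $*_1$) is identically zero: such indices carry both signs and must be assigned to $\mathcal{I}$ or its complement so as to adjust the parity of $\#\mathcal{I}$ to be even.
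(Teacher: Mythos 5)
Your part (i) is essentially the paper's argument. The paper routes it through Theorem~\ref{thm:chara} and the Kronecker-product decomposition of Theorem~\ref{prop:decomp2} (identifying the diagonal entries of $\Lambda$ with $(f*_d\mu_d)(i_1,\ldots,i_d)$ and invoking the inertia-preservation of the congruence), whereas you specialize Theorem~\ref{thm:meetcharacoro} to the poset $(\mathbb{Z}_+^d,\preceq)$ with least element $(1,\ldots,1)$ and identify $(f_r*_P\mu_P)(\mathbf{1},\cdot)$ with $f*_d\mu_d$; the two routes rest on the same $LDL^{\rm T}$/inertia mechanism and your translation between the incidence-algebra and Dirichlet convolutions is correct. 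For part (ii), the factorization $(f*_d\mu_d)(i_1,\ldots,i_d)=\prod_j(g_j*_1\mu)(i_j)$ is exactly what the paper computes; the paper then simply writes ``which yields the assertion,'' so your explicit sign analysis goes beyond what the paper records.

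That sign analysis is correct in the generic case, but your treatment of the degenerate case does not close --- and in fact it cannot, because the ``only if'' direction of (ii) as stated fails there. If some $h_j:=g_j*_1\mu$ vanishes identically then $g_j\equiv 0$, hence $f\equiv 0$, which is positive definite no matter what the remaining $g_k$ are; but if some other $h_k$ takes both strictly positive and strictly negative values (e.g.\ $d=2$, $g_1\equiv 0$, $g_2=\mu$, so that $h_2=\mu*_1\mu$ equals $1$ at $1$ and $-2$ at any prime), then no set $\mathcal{I}$ of any parity satisfies the stated sign conditions, since $h_k$ lies in neither $\{h\leq 0\}$ nor $\{h\geq 0\}$. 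Your proposed fix --- assigning the identically-zero indices to $\mathcal{I}$ or its complement so as to adjust parity --- addresses the wrong obstruction: the problem is not the parity of $\#\mathcal{I}$ but the sign-indefiniteness of the \emph{other} factors, which is no longer excluded once a single vanishing factor annihilates every product. Either the degenerate case must be excluded by hypothesis (no $g_j\equiv 0$, equivalently $f\not\equiv 0$), or the conclusion must be amended to ``$f\equiv 0$ or there exists $\mathcal{I}\ldots$''. This is a defect of the theorem as printed (the paper's own one-line deduction glosses over it), but as written your proof of the ``only if'' direction has a genuine gap at exactly the point you flagged as the main obstacle.
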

\proof (i) Due to Theorem~\ref{thm:chara}, it suffices to consider the positive semidefiniteness of matrices $(\{1,\ldots,m\}^d)_f$ for all $m\in\mathbb{Z}_+$. On the other hand, Proposition~\ref{prop:decomp2} implies that the matrix $(\{1,\ldots,m\}^d)_f$ is a congruence transformation of $\Lambda={\rm diag}(c(i_1,\ldots,i_d))_{(i_1,\ldots,i_d)\in\{1,\ldots,n\}^d}$ (multi-indices enumerated in lexicographic order), where
\[
c(i_1,\ldots,i_d)=\sum_{\substack{k_j| i_j\\ j\in\{1,\ldots,d\}}}f(k_1,\ldots,k_d)\mu\bigg(\frac{i_1}{k_1}\bigg)\cdots\mu\bigg(\frac{i_d}{k_d}\bigg)=(f*_d\mu_d)(i_1,\ldots,i_d).
\]
Since the congruence transformation preserves the inertia of any matrix, this concludes the proof of part (i).

(ii) For $f(x_1,\ldots,x_d)=g_1(x_1)\cdots g_d(x_d)$, it holds that
\begin{align*}
&(f*_d\mu_d)(i_1,\ldots,i_d)\\
&=\sum_{\substack{k_j|i_j\\ j\in\{1,\ldots,d\}}}f(k_1,\ldots,k_d)\mu\bigg(\frac{i_1}{k_1}\bigg)\cdots\mu\bigg(\frac{i_d}{k_d}\bigg)\\
&=\bigg(\sum_{k|i_1}g_1(k)\mu\bigg(\frac{i_1}{k}\bigg)\bigg)\cdots\bigg(\sum_{k|i_d}g_d(k)\mu\bigg(\frac{i_d}{k}\bigg)\bigg)\\
&=(g_1*_1\mu)(i_1)\cdots(g_d*_1\mu)(i_d),
\end{align*}
which yields the assertion.
\endproof

\section{Examples of positive definite functions of several variables}\label{sec:examples}

We begin this section by illustrating the monotonicity property of Theorem~\ref{thm:monotonicity} for two lattices.

\begin{example}[Monotonicity]
$ $

(a) Let $(P,\wedge,\mathbf{\hat 0}_P)=(\mathbb{Z}_+^2,(\cdot,\cdot)_2,1)$ be the two-dimensional divisor lattice. In this case, $\zeta_P(\boldsymbol{x},\boldsymbol{y})=1$ whenever $x_1|y_1$ and $x_2|y_2$ for $\boldsymbol{x}=(x_1,x_2)\in\mathbb{Z}_+^2$ and $\boldsymbol{y}=(y_1,y_2)\in\mathbb{Z}_+^2$ and $0$ otherwise.

(b) Let $(P,\wedge,\mathbf{\hat 0}_P)=(\mathbb{Z}_+^2,\min_2,1)$ be the two-dimensional MIN lattice, where $$\text{min}_2((x_1,y_1),(x_2,y_2))=(\min(x_1,x_2),\min(y_1,y_2))$$
for $(x_1,x_2),(y_1,y_2)\in\mathbb{Z}_+^2.$ In this case, $\zeta_P(\boldsymbol{x},\boldsymbol{y})=1$ whenever $x_1\leq y_1$ and $x_2\leq y_2$ for $\boldsymbol{x}=(x_1,x_2)\in\mathbb{Z}_+^2$ and $\boldsymbol{y}=(y_1,y_2)\in\mathbb{Z}_+^2$  and $0$ otherwise.

It is not difficult to see that here one can obtain an absolutely increasing positive definite function $f\!:P\to\mathbb{R}$ with respect to the partial order relation by defining $f(x)=(g_r*_P\zeta_P)(\boldsymbol{\hat 0}_P,x)$ with $g(x)=g_r(\boldsymbol{\hat 0}_P,x)=1$ for all $x\in P$ (see Corollary~\ref{cor:pcor}). Illustrations of the relative magnitude of values that these positive definite functions take with respect to each lattice (a) and (b) are given in Figure~\ref{fig:monotonicity}.

\begin{figure}[!t]
\begin{center}
\includegraphics[width=.49\textwidth]{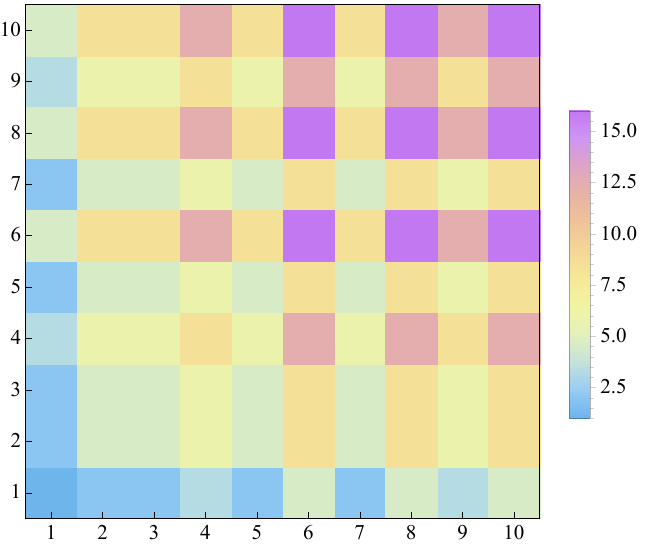}
\includegraphics[width=.49\textwidth]{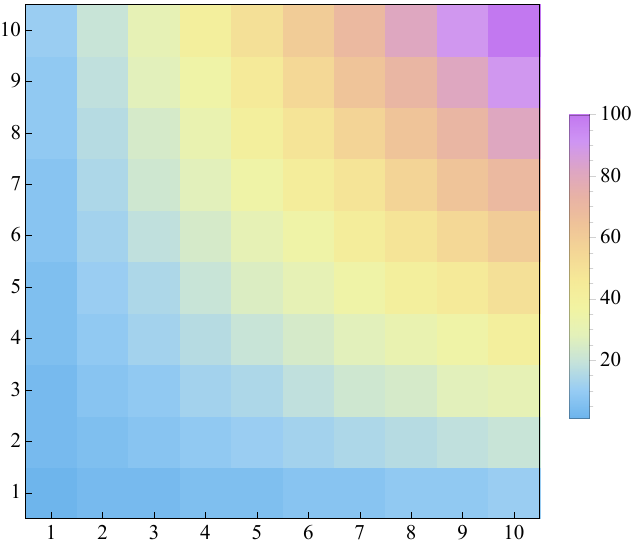}
\end{center}
\caption{Left: an example of the monotonicity of a positive definite function subject to the divisor lattice $(\mathbb{Z}_+^2,(\cdot,\cdot)_2)$ in the grid $\{1,\ldots,10\}^2$. Right: an example of the monotonicity of a positive definite function subject to the MIN lattice $(\mathbb{Z}_+^2,\min_2)$ in the grid $\{1,\ldots,10\}^2.$}\label{fig:monotonicity}
\end{figure}
\end{example}

\begin{example}\label{ex:gcdlcm} $ $

(a)~Let $f(x,y)={\rm gcd}(x,y)$. Now the generalized GCD matrix $A=(\{1,\ldots,m\}^2)_f$ has the form
\[
A_{(i_1,i_2),(j_1,j_2)}={\rm gcd}({\rm gcd}(i_1,j_1),{\rm gcd}(i_2,j_2))={\rm gcd}(i_1,i_2,j_1,j_2),
\]
where the rows and columns are enumerated by the multi-indices $(i_1,i_2),(j_1,j_2)\in\{1,\ldots,m\}^2$. Since $\{1,\ldots,m\}$ is meet closed, the matrix $A$ has rank $m$ and its full-rank submatrix can be found by considering the rows and columns with indices $(i_1,i_2),(j_1,j_2)\in\{(1,1),(2,2),\ldots,(m,m)\}$. In particular, $A$~admits to a congruence transformation
\[
PAP^{\rm T}=\begin{pmatrix}B&O\\ O&O\end{pmatrix},
\]
where $B_{i,j}={\rm gcd}(i,j)$, $i,j\in\{1,\ldots,m\}$, and $P$ is an elimination matrix (including pivots) targeting all linearly dependent rows of $A$. Since $B$~is well known to be positive definite for all $m\in\mathbb{Z}_+$, we conclude that the matrix $A$~is positive semidefinite and the function $f$~is positive definite in consequence. 

(b)~Let $f(x,y)={\rm lcm}(x,y)$. The characteristic polynomial of $(\{1,2\}^2)_f$ is
\[
p(\lambda)=(\lambda-1)(\lambda^3-6\lambda^2+1),
\]
which has one negative root. Therefore $f$~is not positive definite.
\end{example}

The trick we used in part (a) of Example~\ref{ex:gcdlcm} can be generalized.

\begin{example} Let $(P,\preceq,\wedge,\mathbf{\hat 0}_P)$ be a locally finite meet semilattice and suppose that there exists a covering $(S_i)_{i=1}^\infty$ consisting of finite and nonempty subsets of $P$. Let us assume that $g\!:P\to\mathbb{R}$ is a positive definite function.

Define the function $f\!:P\times\cdots\times P\to\mathbb{R}$ by setting $f(x_1,\ldots,x_d)=g(x_1\wedge\cdots\wedge x_d)$ and let $S=\{x_1,\ldots,x_m\}$ be a finite lower closed subset of $P$ ordered such that $x_i\preceq x_j$ $\Rightarrow$ $i\leq j$.\footnotemark\footnotetext{Notice that by the remarks in the beginning of the proof of Theorem~\ref{thm:meetcharacoro}, we can now find a lower closed covering of $P$. Thus it is sufficient to prove the positive semidefiniteness of the meet matrix $(S)_f$ for all finite and nonempty lower closed sets $S\subset P$.} It suffices to show the positive semidefiniteness of the meet matrix
\[
A_{(i_1,\ldots,i_d),(j_1,\ldots,j_d)}=g(x_{i_1}\wedge x_{j_1}\wedge \ldots\wedge x_{i_d}\wedge x_{j_d}),
\]
where $(i_1,\ldots,i_d),(j_1,\ldots,j_d)\in\{1,\ldots,m\}^d$ enumerate the rows and columns, respectively. Define the $m\times m$ matrix $B$ by setting $B_{i,j}=g(x_i\wedge_P x_j)$ for all $i,j\in\{1,\ldots,m\}$. Since $S$ is factor closed, it is especially meet closed, and thus $A$ contains only $m$ linearly independent rows and columns. Hence there exists an elimination matrix $P$ (with pivots) such that
\[
PAP^{\rm T}=\begin{pmatrix}B&O\\ O&O\end{pmatrix}.
\]
Now the multivariate function $f$ is positive definite precisely when the univariate function $g$ is positive definite. Some examples of positive definite multivariate arithmetic functions $f\!:\mathbb{Z}_+^2\to\mathbb{R}$ are $f(x,y)={\rm gcd}(x,y)^\alpha$ for $\alpha>0$ and $f(x,y)={\rm lcm}(x,y)^{\alpha}$ for $\alpha<0$.

\end{example}

\begin{example}\label{ex:nonnegative}
Assume that $f\!:\mathbb{Z}_+^2\to\mathbb{R}$ is of the form
\[
f=g*_2\zeta_2,
\]
where $g$ is always nonnegative. Then by Corollary~\ref{cor:pcor}, $f$ is positive definite.
\end{example}

Typical examples of nonnegative functions covered by Example~\ref{ex:nonnegative} are combinatorial number-theoretic functions counting the number of integers satisfying certain conditions. For example, number of solutions of certain congruences, see~\cite{Mc}.

\begin{example} Let us consider Ramanujan's sum $C(m,n)$, see~\cite{Mc}. It is well known that
\[
C(m,n)=\sum_{d|{\rm gcd}(m,n)}d\mu\big(\frac{n}{d}\big).
\]
Let
\[
P(m,n)=\begin{cases}
n,&\text{if $m=n$},\\
0&\text{otherwise.}
\end{cases}
\]
Now
\begin{align*}
C(m,n)=\sum_{d|m\text{ and }e|n}P(d,e)\zeta\big(\frac{m}{d}\big)\mu\big(\frac{n}{e}\big)=(P*_2(\zeta\mu))(m,n)
\end{align*}
and thus
\begin{align*}
(C*_2\mu_2)(m,n)&=(P*_2(\zeta\mu)*_2\mu_2)(m,n)\\
&=(P*_2\delta(\mu*_1\mu))(m,n)\\
&=\sum_{d|{\rm gcd}(m,n)}d\delta\big(\frac{m}{d}\big)(\mu*_1\mu)\big(\frac{n}{d}\big)\\
&=\begin{cases}
0,&\text{if $m\!\!\not |n$},\\
m(\mu*_1\mu)\big(\frac{n}{m}\big),&\text{if $m|n$}.
\end{cases}
\end{align*}
For suitable $m$~and $n$, the term $(\mu*_1\mu)(n/m)$ can be positive, negative, or zero since
\begin{align*}
&(\mu*_1\mu)(1)=1,\\
&(\mu*_1\mu)(p)=-2,\\
&(\mu*_1\mu)(p^2)=1,\\
&(\mu*_1\mu)(p^k)=0\text{ for }k\geq 3,
\end{align*}
and $\mu*_1\mu$ is multiplicative. Therefore, $C(m,n)$ is not positive definite.
\end{example}

\section*{Conclusions}

In this work, we have generalized the recently introduced notion of positive definiteness of arithmetic functions $f\!:\mathbb{Z}_+\to\mathbb{R}$ to functions defined on posets. In particular, we have shown that the major results proved for positive definite arithmetic functions in~\cite{HaukkanenMattila18} generalize in a natural way to functions $f\!:P\to\mathbb R$ with poset domains.


\bibliography{hikm}

\end{document}